\numberwithin{equation}{section}
\newcommand{\E}{\mathbb {E}}
\newcommand{\be}{\begin{equation}}
\newcommand{\ee}{\end{equation}}
\newcommand{\R}{\mathbb R}
\newcommand{\<}{\langle}
\renewcommand{\>}{\rangle}
\newtheorem{thm}{Theorem}[section]
\newtheorem{lem}[thm]{Lemma}
\theoremstyle{definition}
\theoremstyle{remark}
\title{On The largest eigenvalue of products from the $\beta$-Laguerre ensemble}
\author{Zachary Gelbaum}
\address{Oregon State University}
\email{gelbaumz@math.oregonstate.edu}
\date{\today}                                           
\begin{document}
\begin{abstract}
We determine the limiting distribution of the largest eigenvalue of products from the $\beta$-Laguerre ensemble.  This limiting distribution is given by a Tracy-Widom law with parameter $\beta_0>0$ depending on the ratio of the parameters of the two matrices involved.\end{abstract}
\maketitle
\section{Introduction}
The limiting spectral behavior of products of random matrices has been the subject of a number of studies in random matrix theory and various results on the limiting spectral distribution of such products are by now known (e.g. \cite{MR2861673, MR2736204, MR2772389}).  In general the spectra of such products will be complex, but in the event it is real, e.g., that of the product of two Hermitian matrices where one is non-negative definite (see for example \cite{MR2292918, MR1370408, MR2293813}), it makes sense to speak of the largest eigenvalue.  There are strong limit laws known for these largest eigenvalues, but so far there are no results regarding the distribution of the fluctuations around the strong limit.  The purpose of this paper is to investigate this limiting distribution in the setting the $\beta$-Laguerre ensembles.  

The $\beta$-Laguerre ensemble generalizes the classical Laguerre ensemble by allowing $\beta$ to vary over the positive reals in 
\begin{equation}c_{n,\kappa}^\beta\prod_{i<j}|\lambda_i-\lambda_j|^\beta\prod_{k=1}^n\lambda_k^{\frac\beta2(\kappa-n+1)-1}e^{-\frac\beta 2\lambda_k}, \end{equation} where without loss of generality $\kappa\geq n$ and $c_{n,\kappa}^\beta$ is a normalizing constant (see e.g$.$ \cite{MR2641363}). The above densities first arose in the study of certain quantum systems and orthogonal polynomials (see \cite{MR2641363} and references therein), however there were initially no known random matrices with these eigenvalue densities.  Then in \cite{MR1936554} the authors constructed families of tridiagonal random matrices whos eigenvalue densities agreed with the above, and in \cite{MR2813333}  the limiting distribution of the largest eigenvalues was determined, thus generalizing the classical Tracy-Widom laws for $\beta=1,2,4$ to a family of distributions indexed by $\beta>0$, denoted $TW_\beta$.  

In a first approach to the general problem of finding the limiting distribution of the largest eigenvalue of a product of random matrices, we are free to choose which matrix ensemble to work with and the $\beta$-ensembles along the methods employed in \cite{MR2813333} are particularly amenable to such a study (the reader may note that throughout this paper we make the slight abuse of language in referring to both the above density and the corresponding family of random matrices as the $\beta$-Laguerre ensemble).  Our results are as follows:\begin{thm}Let $X_n^p$ and $X_n^q$ be two independent elements of the $\beta$-Laguerre ensemble, with $\kappa=p$ and $q$ respectively.  Assume that $n\leq p\leq q$ and that $p=O(n)=q$.  Then if $\lambda_{n,0}$ is the largest eigenvalue of $X_n^pX_n^q$, \[\frac{\lambda_{n,0}-\mu_n}{\sigma_n}\stackrel{d}\to TW_{\beta_0},\] where $TW_{\beta_0}$ denotes the Tracy-Widom Law with parameter $\beta_0$ and \[\beta_0=\lim_{n\to\infty}{C_n}\beta ,\quad \mu_n=(\sqrt n+\sqrt p)^2(\sqrt n+\sqrt q)^2,\quad \sigma_n=c_n\frac{(\sqrt n+\sqrt p)^\frac43(\sqrt n+\sqrt q)^\frac43}{(\sqrt{np})^\frac13(\sqrt{nq})^\frac13},\] the constants $C_n$ and $c_n$ being defined by (\ref{c_n}) and (\ref{C_n}) in section 2.4 below.\end{thm}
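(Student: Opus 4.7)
The plan is to adapt the stochastic-Airy operator strategy of \cite{MR2813333} for the single-matrix $\beta$-Laguerre ensemble to the present product setting. First I would realize both ensembles via their bidiagonal models \cite{MR1936554}, writing $X_n^p=B_p^*B_p$ and $X_n^q=B_q^*B_q$ with independent $n\times n$ lower-bidiagonal matrices $B_p,B_q$ whose nonzero entries are independent $\chi$ variables. Using that $\mathrm{spec}(AB)=\mathrm{spec}(BA)$ on the nonzero eigenvalues of square matrices, the eigenvalues of $X_n^pX_n^q=B_p^*B_pB_q^*B_q$ agree with those of
\begin{equation*}
B_pB_q^*B_qB_p^*\;=\;(B_qB_p^*)^*(B_qB_p^*),
\end{equation*}
i.e., with the squared singular values of $M:=B_qB_p^*$. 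A direct entrywise computation shows that $M$ is tridiagonal with entries built explicitly from the $\chi$ variables, so the whole problem reduces to the edge behavior of the singular values of a single tridiagonal random matrix.

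Next I would pose the singular value equation as the coupled three-term system $Mv=\sqrt\lambda u$, $M^*u=\sqrt\lambda v$, introduce the edge ansatz $\lambda=\mu_n+\sigma_n\xi$ and index change $k=n-\lfloor s h_n\rfloor$ with $h_n\asymp n^{2/3}$, and substitute the standard asymptotics $\chi_m=\sqrt m\bigl(1+(2m)^{-1/2}N_m+O(m^{-1})\bigr)$ for independent standard Gaussians $N_m$. Collecting the deterministic parts to second order produces a linear Airy-type potential $s$ on the rescaled axis, while the fluctuations of the two independent $\chi$ sequences (one from $B_p$, one from $B_q$) combine additively into a single effective noise. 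The expressions for $\mu_n$ and $\sigma_n$ in the theorem are precisely the leading and first-subleading scales that emerge from this expansion, and the defining relation for $C_n$ (and hence $\beta_0=\lim C_n\beta$) is exactly the coefficient required to match the variance of the combined noise to the canonical strength $2/\sqrt{\beta_0}$ of the stochastic Airy operator.

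With the rescaling in hand, the remaining work is to upgrade the formal recurrence limit to genuine operator convergence. I would follow \cite{MR2813333} closely: embed the partial Brownian sums into a common probability space (Skorokhod/Donsker), establish tightness of the discrete resolvents via standard variational bounds, and verify convergence of the quadratic forms to those of the stochastic Airy operator $H_{\beta_0}=-\partial_s^2+s+(2/\sqrt{\beta_0})B'(s)$. Since the lowest eigenvalue of $H_{\beta_0}$ is distributed as $TW_{\beta_0}$, the top eigenvalue of $M^*M$ converges in distribution to $TW_{\beta_0}$ after reversing the edge rescaling.

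The hard part will be the bookkeeping in the second paragraph: combining the independent $\chi$ fluctuations from the two factors $B_p$ and $B_q$ into a \emph{single} limiting white noise, while simultaneously matching the deterministic parts to second order so that the Airy potential appears with the correct slope. This is where the precise form of the constants $c_n$ and $C_n$ is pinned down, and where the argument genuinely goes beyond a routine extension of \cite{MR2813333}; the tightness and operator-convergence machinery can then be imported essentially unchanged.
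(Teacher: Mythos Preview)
Your route is genuinely different from the paper's. The paper never passes to $M=B_qB_p^*$; instead it works directly with the centered and scaled product, using the identity
\[
\mu_{n,p}\mu_{n,q}I-X_n^pX_n^q=\mu_{n,q}(\mu_{n,p}I-X_n^p)+X_n^p(\mu_{n,q}I-X_n^q)
\]
to write $H_n=a_n\bar H_n^p+b_n\bar H_n^q-d_n\bar H_n^p\bar H_n^q$ with $d_n=O(m_n^{-2})$. The linear part $a_n\bar H_n^p+b_n\bar H_n^q$ is a bona fide tridiagonal operator of exactly the form treated in \cite{MR2813333}, and the two independent noises add to give the effective parameter $\beta_0$ via $C_n$ just as you describe. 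The substantive new work in the paper is (i) showing that the cross term $d_n\bar H_n^p\bar H_n^q$ vanishes when tested against $C_c^\infty$ (Lemma~2.1), and (ii) salvaging a coercivity bound in its presence (Lemma~2.2, which uses a numerical-range argument for the product of two positive operators). Your cyclic reduction is correct and conceptually attractive---it exhibits the spectrum of $X_n^pX_n^q$ as squared singular values of a single matrix---but note that $M$ is \emph{tridiagonal}, not bidiagonal: its sub- and super-diagonals are both products of $\chi$ variables of order $\sqrt{n}$. Consequently $M^*M$ is pentadiagonal (equivalently, the interlaced coupled system has bandwidth three), so the framework of \cite{MR2813333} does not apply off the shelf. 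To make your plan go through you must show that the outer bands of $M^*M$ are lower order at the edge scaling and do not spoil coercivity; this is real work, and it is essentially the same obstacle the paper handles via its product term $d_n\bar H_n^p\bar H_n^q$. So your last paragraph is too optimistic: the hard step is not the additive noise bookkeeping (which is indeed routine and yields $C_n$), but the control of the non-tridiagonal piece, for which you would need analogues of the paper's Lemmas~2.1 and~2.2.
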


We have written the scaling terms to ease comparison to the case of a single matrix (e.g$.$ \cite{MR2813333}, Theorem 1.4), noting that $c_n\to c\in \R$ by the hypothesis $p=O(q)$.  It is worth noting that if both matrices are identically distributed, i.e$.$ $p=q$, then $C_n=2$, so even in the i.i.d$.$ case the parameter of the limiting Tracy-Widom law is different than that of the factors.

In \cite{MR2813333} the authors show how elements of the $\beta$-Laguerre ensembles can be realized as finite difference approximations to a stochastic differential operator on $[0,\infty)$.  Just as in the usual finite difference schemes, e.g., for the Laplacian on $[0,\infty)$, the lowest $k$ eigenvalues and eigenvectors converge to those of the limiting operator.  This characterization of the limiting distributions is robust and we make full use of the results and techniques in \cite{MR2813333} below, in particular section 5 in that paper.  We note here that although we assume in Theroem 1.1 that $n\leq p\leq q$, this is only to simplify the proof; one can relabel parameters without altering the arguments in any essential way. 

In the next section we outline the setup from \cite{MR2813333} and then proceed to the proof of Theorem 1.1.  We end with some remarks and further questions in section 3.

\section{Proof of Theorem 1.1}
\subsection{Tridiagonal elements of the $\beta$-Laguerre ensemble}  Here we briefly describe the tridiagonal matrix ensemble that realizes (1.1); for proofs and further discussion see \cite {MR1936554} and \cite{MR2641363}.  Let $\chi_\alpha$ denote the random variable with density \[\chi_\alpha\sim\frac2{\Gamma\left(\frac\alpha2\right)}x^{\alpha-1}e^{-x^2},\] said to be a chi random variable with parameter $\alpha$.  Let $B_n^\kappa$, $\kappa\geq n$ be the following matrix: \[B_n^\kappa=\begin{bmatrix}\tilde\chi_{\beta\kappa}\\\chi_{\beta(n-1)}&\tilde\chi_{\beta(k-1)}\\&\ddots&\ddots\\& &\chi_\beta&\tilde\chi_{\beta(\kappa-n+1)}
\end{bmatrix},\] where $\tilde\chi_\alpha$ and $\chi_\alpha$ denote independent chi random variables.  Then the eigenvalues of \[X_n^\kappa\equiv \left(B_n^\kappa\right)^*B_n^\kappa\] have density (1.1).  Note that $X_n^\kappa$ has \[\tilde\chi_{\beta(\kappa-j+1)}^2+\chi_{\beta(n-j)}^2\] along the main diagonal, $j=1,\dots, n$, and \[\tilde\chi_{\beta(\kappa-j)}\chi_{\beta(n-j)}\] above and below the main diagonal.

\subsection{Notation and Setup from \cite {MR2813333}}
Unless specified otherwise, for vectors $v,u\in \R^n$, $\<v,u\>$ denotes the Euclidean inner product and likewise for $\|v\|$.

Fix $\beta>0$ and let $X_n^i$, $i=p,q$, be as above.  Define \[H^p_n\equiv\frac{\mu_{n,p}-X_n^p}{\sigma_{n,p}},\qquad H^q_n\equiv\frac{\mu_{n,q}-X_n^q}{\sigma_{n,q}},\]  \[m_{n,i}\left(\frac{\sqrt{ni}}{\sqrt n+\sqrt i}\right)^\frac23=n^\frac13\left(\frac{\sqrt{\frac in}}{1+\sqrt{\frac in}}\right)^\frac23,\] \[\mu_{n,i}=(\sqrt n+\sqrt i)^2,\qquad \sigma_{n,i}=\frac{(\sqrt n+\sqrt i)^\frac43}{(\sqrt{ni})^\frac13}.\]  Note here that the $X_n^i$, and hence the $H_n^i$, are independent, a fact we will use repeatedly below. 


Let $L^*$ be the following subspace of $L^2$,\[L^*=\{f\in L^2[0,\infty):\,f(0)=0,\,\|f\|_*^2<\infty\}\] where \[\|f\|_*^2=\int_0^\infty (f^\prime)^2+xf^2+f^2dx.\]
Let $B$ be standard Brownian motion on $[0,\infty)$ and for $f\in L^*$ define \[H_\beta(f)=-\frac{d^2}{dx^2}f+xf+\frac2{\sqrt{\beta}} B^\prime f\] where $B^\prime f$ is the distribution given by \[\frac d{dt}\int_0^tf\,dB\] and where we denote the action of $H_\beta f$ on a test function $\phi\in C_c^\infty$ by \[(\phi,H_\beta f).\]  Thus if $\phi$ is a test function, \[(B^\prime f,\phi)=-(f^\prime B,\phi)-(fB,\phi^\prime).\]
In \cite{MR2813333} it is shown that $(g,H_\beta f)$ defines a continuous bilinear form on $L^*$ and if $\lambda$ denotes the smallest eigenvalue of $H_\beta$, given by \begin{equation}\label{eigenfunction definition}\lambda=\inf\{(f,H_\beta f),:\, f\in L^*,\,\|f\|_{L^2}=1\},\end{equation} then $-\lambda$ is distributed as $TW_\beta$: $-\lambda\sim TW_\beta$.  

Next let $L^*_{n,i}$ be the subspace of $L^2[0,\infty)$ consisting of step functions of the following form: \[f=\sum_{k=1}^nc_k\chi_{[\frac{k-1}{m_{n,i}},\frac k{m_{n,i}}]}.\]  Let $P_n$ be the projection from $L^2$ onto this subspace.  Then $L^*_{n,i}$ is isometric to $\R^n$ with the inner product \[m_{n,i}^{-1}\<v,u\>=m_{n,i}^{-1}\sum_{k=1}^nv_ku_k,\] \[\<f,g\>_{L^2}=\sum_{k=1}^nc_kd_km_{n,i}^{-1}=m_{n,i}^{-1}\<f,g\>_{\R^n}.\]  We let $T_n$ denote the shift operator \[(T_nv)_k=v_{k+1},\] that is, the operator given by the $n\times n$ matrix with $1$'s below the main diagonal and zero's elsewhere.  Then define the difference operator \[\Delta^i_nv_k=m_{n,i}(v_{k}-v_{k-1})=m_{n,i}(I-T_n^*)v_k,\]i.e., for $\phi\in C_c^\infty$ $\Delta^i_n\Delta_n^{i*}P_n\phi\to\phi''$ in $L^2$, and note $\|T_n\|=1$.  Additionally, for two vectors $u,v\in\R^n$ we denote by $u_\times v$ the vector \[(u_1v_1,\dots,u_nv_n).\]

$H_n^i$ now takes the following form: \[H_n^iv=-\Delta_n^i\Delta_n^{i*}v+\left(\Delta_n^iy_{n,1}^i\right)_\times v+\frac12\left(\Delta_n^iy_{n,2}^i\right)_\times T_nv+\frac12T^*_n\left(\Delta_n^iy_{n,2}^i\right)_\times v,\]
\[\Delta_n^iy^i_{n,j}=\eta_{n,j}^i+\Delta_n^iw_{n,j}^i,\]
\[(\eta_{n,1}^i)_k=\frac{m_{n,i}^2}{\sqrt{ni}}(n+i-\beta^{-1}\E[\tilde\chi^2_{\beta(i-k+1)}+\chi^2_{\beta(n-k)}])=\frac{m_{n,i}^2}{\sqrt{ni}}(2k-1)\]
\[(\eta_{n,2}^i)_k=\frac{m_{n,i}^2}{\sqrt{ni}}2(\sqrt{ni}-\beta^{-1}\E[\chi_{\beta(n-k)}\tilde\chi_{\beta(i-k)}]),\]
\[(w_{n,1}^i)_k=\frac{m_{n,i}}{\sqrt{ni}}\sum_{j=1}^k\left(n+i-\beta^{-1}(\chi_{n-j}^2+\tilde\chi_{i-j+1}^2)\right)-m_{n,i}^{-1}(\eta_{n,1}^i)_k\]
\[(w^i_{n,2})_k=\frac{m_{n,i}}{\sqrt{ni}}2\sum_{j=1}^k\left(\sqrt{ni}-\beta^{-1}\chi_{\beta(n-j)}\tilde\chi_{\beta(i-j)}\right)-m_{n,i}^{-1}(\eta^i_{n,2})_k.\]


We now collect some bounds we will need in the proof below.  In \cite{MR2813333} it is shown that for each $i$ and any subsequence $H^i_{n_m}$ there exists a further subsequence and a probability space such that the statements below hold almost surely and from now on we will assume we are working with such a subsequence. 

First we have that for any $\epsilon>0$ there is a $c^{i}_\epsilon>0$ such that \begin{equation}|\Delta_n^iw^i_{n,j,k}|\leq m_{n,i}\sqrt{\epsilon\tilde\eta_{n,k}^i+c_\epsilon^i}\end{equation} where \[\tilde\eta_{n,k}^i=\frac k{m_{n,i}}.\]  Next we have the following two bounds\begin{equation}\eta^i_{n,j,k}\leq2m_{n,i}^2,\qquad c^\eta_1\tilde\eta^i\leq\eta^i_{n,1,k}+\eta^i_{n,2,k}\leq c^\eta_2 \tilde\eta^i\end{equation} for some 
$c^\eta_i>0$.  Finally (cf section 6 in \cite{MR2813333}), there exist independent Brownian motions $B^i$ and processes $y^i_{j}(x)$ such that \begin{equation}y^i_{n,j}(x)\equiv(y^i_{n,j})_{\lfloor xm_{n,q}\rfloor}\mathbf {1}_{xm_{n,q}\in[0,n]}\to y^i_j(x)\end{equation} and \[y^i_{n,1}(x)+y^i_{n,2}(x)\to \frac2{\sqrt{\beta}}B^i+\frac {x^2}2\] in the Skorokhod topology on $D[0,\infty)$. 
 
\subsection{Outline of the proof}  Let $H_\beta^i$ denote the operator $H_\beta$ above with $B^i$ in place of $B$.  In \cite{MR2813333} the authors show, for each subsequence restricted to a further subsequence such that the above bounds hold a.s$.$, that the smallest eigenvalue and corresponding eigenvector of $H^i_n$ converge to that of $H_\beta^i$ using three Lemmas, numbered $5.6-5.8$, the content of which is as follows:  Lemma 5.6 states that there are positive constants $c^i_k$ independent of $n$ such that for all $v\in \R^n$ \[c^i_1\|v\|_{i,n*}^2-c^i_2m_{n,i}^{-1}\|v\|^2_2\leq m_{n,i}^{-1}\<H_n^iv,v\>_{\R^n} \leq c^i_3\|v\|_{n,i*}^2\] where \[\|v\|_{i,n*}^2= m_{n,i}^{-1}(\|\Delta^i_nv\|_{\R^n}^2+\|(\bar\eta^i_{n})_\times^\frac12v\|_{\R^n}^2+\|v\|_{\R^n}^2).\]  This is a coercivity bound used to control the eigenvectors as $n\to\infty$.  Lemma 5.7 establishes convergence in the sense of distributions, i.e., if $f_n\in L^*_{n,i}$ is such that $f_n\to f$ and $\Delta_n^if_n\to f'$ weakly in $L^2$ then for any $\phi\in C_c^\infty$ \[\<\phi,H_n^if_n\>_{L^2}\to(\phi,H_\beta^if).\]  Lastly Lemma 5.8 ensures that the eigenvectors of $H_n^i$ contain a subsequence converging to those of $H^i$: If $f_n\in L_{n,i}^*$, $\|f\|_{n,i*}^2\leq c<\infty$, and $\|f\|_{L^2}^2=1$ then there exists a subsequence $f_{n_k}$ such that $f_{n_k}\to_{L^2} f\in L^*$ and $\<\phi,H_{n_k}^if_{n_k}\>_{L^2}\to(\phi,H_\beta^if)$ for all $\phi\in C^\infty_c$.

We want to study the smallest eigenvalue of \begin{align}\notag H_n=\frac{\mu_{n,p}\mu_{n,q}I-X_n^pX_n^q}{\sigma_{n,p}^2\sigma_{n,q}^2}&=\frac{\mu_{n,p}I-X_n^p}{\sigma_{n,p}}\frac{X_n^q}{\sigma_{n,q}^2\sigma_{n,p}}+\frac{\mu_{n,p}}{\sigma_{n,p}^2\sigma_{n,q}}\frac{\mu_{n,q}I-X_n^q}{\sigma_{n,q}}\\\notag&= \frac{\mu_{n,q}}{\sigma_{n,q}^2\sigma_{n,p}}H_n^p(I-\frac{\sigma_{n,q}}{\mu_{n,q}} H_n^q)+\frac{\mu_{n,p}}{\sigma_{n,p}^2\sigma_{n,q}} H_n^q\\\notag&=a_n\bar H^p_n+b_n\bar H^q_n-\frac{m_{n,p}^2m_{n,q}^2}{m_n^4\sigma_{n,p}\sigma_{n,q}}\bar H_n^p\bar H_n^q \end{align} 
where \[\bar H_n^i=\frac{m^2_n}{m^2_{n,i}}H^i_n,\qquad m_n=\left(\frac{\left(\frac{\mu_q}{\sigma_{n,q}^2\sigma_{n,p}}m_{n,p}^2+\frac{\mu_p}{\sigma_{n,p}^2\sigma_{n,q}}m_{n,q}^2\right)m_{n,p}m_{n,q}}{\frac{\mu_q}{\sigma_{n,q}^2\sigma_{n,p}}m_{n,q}+\frac{\mu_p}{\sigma_{n,p}^2\sigma_{n,q}}m_{n,p}}\right)^\frac13\] and
\[a_n=\frac{m^2_{n,p}\mu_{n,q}}{m_{n}^2\sigma_{n,q}^2\sigma_{n,p}},\qquad b_n=\frac{m_{n,q}^2\mu_{n,p}}{m_n^2\sigma_{n,p}^2\sigma_{n,q}}.\]
This choice of $m_n$ ensures the proper scaling for the convergence we need below.

In the next section we determine the limiting operator of $ H_n$ in the sense above.  The product term $\bar H_n^p\bar H_n^q$ prevents us from directly applying Theorem 5.1 in \cite{MR2813333}, so instead we will follow the proof of that Theorem, stating and proving Lemmas analogous to those above. 

\subsection{Convergence}
To begin we first establish analogous almost sure bounds to those above.  We have \[\bar H^i_nv=-\Delta_n\Delta_n^*v+\left(\Delta_n\bar y^i_{n,1}\right)_\times v+\frac12\left(\Delta_n\bar y^i_{n,2}\right)_\times T_nv+\frac12T^*_n\left(\Delta_n\bar y^i_{n,2}\right)_\times v\]
where
\[\Delta_n=m_n(I-T_n^*),\]  
\[\Delta_n\bar y^i_{n,j}=\bar\eta^i_{n,j}+\Delta_n\bar w^i_{n,j},\]
\[\bar \eta^i_{n,j}=\frac{m^2_n}{m^2_{n,i}}\eta^i_{n,j},\qquad\bar w^i_{n,j}=\frac{m_{n}}{m_{n,i}} w^i_{n,j},\]i.e.,
\[(\bar y^i_{n,j})_k=\frac1{m_n}\sum_{i=1}^k(\bar\eta^i_{n,j})_k+(\bar w^i_{n,j})_k= \frac{m_n}{m_{n,i}}(y^i_{n,j})_k.\]
Noting that by hypothesis \[m_n=O(m_{n,p})=O(m_{n,q})=O(n^{1/3}),\]it follows easily from (2.2) and (2.3) that we can reduce to subsequences as above such that \begin{equation}\label{increment bound}|(\Delta_n \bar w^i_{n,j})_k|\leq m_n\sqrt{\epsilon\tilde\eta_{n,k}+c_\epsilon},\end{equation} 
\begin{equation}\label{eta bounds}\bar\eta^i_{n,j,k}\leq2m_{n}^2,\qquad c^\eta_1\tilde\eta\leq\bar\eta^i_{n,1,k}+\bar\eta^i_{n,2,k}\leq c^\eta_2 \tilde\eta,\end{equation} and the processes defined by \[\bar y^i_{n,j}(x)\equiv (\bar y^i_{n,j})_{\lfloor xm_{n}\rfloor}\mathbf {1}_{xm_{n}\in[0,n]}\] are convergent in the Skorokhod topology on $D[0,\infty)$, where $\tilde\eta_{n,k}= k/{m_n}$ and we reuse the notation for the constants from above, though they may be different here.  With the bounds (2.5)--(2.6) in hand, the proofs of Lemmas 5.6--5.8 in \cite{MR2813333} apply without change to $\bar H^i_n$, a fact we will use below.

If we now let $\bar y_{n,j}=\frac{a_n}{c_n}\bar y^p_{n,j}+\frac{b_n}{c_n}\bar y^q_{n,j}$ where 
\begin{equation}\label{c_n}c_n=a_n+b_n=\frac{(\sqrt{np}+\sqrt{nq})^2\left((\sqrt n +\sqrt q)^2\sqrt{np}+(\sqrt n+\sqrt p)^2\sqrt{nq}\right)}{(\sqrt n+\sqrt p)^4(\sqrt n+\sqrt q)^4},\end{equation} 
then by our choice of $m_n$ and using the independence of the $y^i$, it follows from \cite{MR2813333}, section 6, that there is a Brownian motion $B_x$ such that \[\bar y_{n,1}(x)+\bar y_{n,2}(x)\to\frac{x^2}{2}+\frac 2{\sqrt{C\beta}} B_x,\]
\begin{equation}\label{C_n}C=\lim_{n\to\infty}\left(\frac{m_n^3}{m_{n,p}^3}\frac{a_n^2}{c_n^2}+\frac{m_n^3}{m_{n,q}^3}\frac{b_n^2}{c_n^2}\right)^{-1}=1+\lim_{n\to\infty}\frac{p(\sqrt n+\sqrt p)^2+q(\sqrt n+\sqrt q)^2}{\sqrt{pq}\left((\sqrt n+\sqrt p)^2+(\sqrt n+\sqrt q)^2\right)},\end{equation}
in law with respect to the Skorokhod topology on $D[0,\infty).$  As already noted, we can reduce to a further subsequence such that this convergence holds almost surely on some probability space. We now have a candidate limiting operator:  \[ H_n\to{c}\left(-\frac{d^2}{dx^2}+x+ \frac2{\sqrt{C\beta}}B_x^\prime\right)=cH_{\beta_0},\qquad\beta_0=C{\beta},\,c=\lim c_n,\] the idea being that $c_n^{-1}(a_n\bar H_n^p+b_n\bar H_n^q)\to H_{\beta_0}$ and the product term $\bar H_n^p\bar H_n^q$ vanishes in the limit.  

In the following Lemma we let $L_n^*$ be the analogue of the discrete spaces already defined above for our new scaling term, e.g., $L^*_n$ is the space of step functions of the form \[f=\sum_{k=1}^nc_k\chi_{[\frac{k-1}{m_{n}},\frac k{m_{n}}]}\] and $P_n$ denotes the projection from $L^2$ onto this space.

\begin{lem} Let $f_n\in L^*_n$ be such that $f_n\to f$ and $\Delta_nf_n\to f'$ weakly in $L^2$.  Then for all $\phi\in C_c^\infty$ \[\<\phi, H_nf_n\>_{L^2}=\<P_n\phi, H_nf_n\>_{L^2}\to(\phi, cH_{\beta_0}f).\] 

\end{lem}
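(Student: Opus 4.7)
The plan is to apply the decomposition
\[H_n = a_n \bar H_n^p + b_n \bar H_n^q - \rho_n\, \bar H_n^p \bar H_n^q,\qquad \rho_n=\frac{m_{n,p}^2 m_{n,q}^2}{m_n^4 \sigma_{n,p}\sigma_{n,q}},\]
already set up in the text, and handle the linear combination and the product term separately after pairing with $P_n\phi$ and $f_n$.

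For the linear combination, observe that $a_n \bar H_n^p + b_n \bar H_n^q = c_n \tilde H_n$, where $\tilde H_n$ has exactly the algebraic form of a single $\bar H_n^i$ but is driven by the convex combination $\bar y_{n,j} = \frac{a_n}{c_n}\bar y^p_{n,j} + \frac{b_n}{c_n}\bar y^q_{n,j}$. Because $\frac{a_n}{c_n},\frac{b_n}{c_n}\in[0,1]$, the bounds (\ref{increment bound}) and (\ref{eta bounds}) transfer verbatim to $\bar y_{n,j}$, and by independence of $B^p,B^q$ together with the definition of $C$ in (\ref{C_n}) we have $\bar y_{n,1}+\bar y_{n,2}\to x^2/2+(2/\sqrt{C\beta})B_x$ in the Skorokhod topology, as already noted. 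Since the proof of Lemma 5.7 of \cite{MR2813333} applies without change to any $\bar H_n^i$ satisfying the bounds (\ref{increment bound})--(\ref{eta bounds}), that same proof applied to $\tilde H_n$ with $\beta$ replaced by $\beta_0=C\beta$ yields $\<P_n\phi,\tilde H_n f_n\>_{L^2}\to(\phi,H_{\beta_0}f)$, and hence $\<P_n\phi,(a_n\bar H_n^p+b_n\bar H_n^q)f_n\>_{L^2}\to c\,(\phi,H_{\beta_0}f)$.

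It remains to show that $\rho_n\<P_n\phi,\bar H_n^p\bar H_n^q f_n\>_{L^2}\to 0$. Under the hypothesis $p=O(n)=q$ a direct scaling computation gives $\rho_n=O(m_n^{-2})$. Using the self-adjointness of each $\bar H_n^i$ on the discrete $L^2$ space (inherited from the symmetry of $X_n^i$), I would rewrite
\[\<P_n\phi,\bar H_n^p\bar H_n^q f_n\>_{L^2}=\<\bar H_n^p P_n\phi,\bar H_n^q f_n\>_{L^2},\]
and then estimate $\|\bar H_n^p P_n\phi\|_{L^2}$ via the compact support of $\phi$, the drift bound (\ref{eta bounds}) (which keeps $\bar\eta_{n,j}^p$ of order one on that support) and the martingale bound (\ref{increment bound}) (which controls $\Delta_n \bar w_{n,j}^p$ pointwise). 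For the second factor I would perform summation by parts that shifts the remaining discrete derivatives in $\bar H_n^q$ back onto the already-computed, compactly supported vector $\bar H_n^p P_n\phi$, and then apply the coercivity bound from the analog of Lemma 5.6 to control the resulting $\|\cdot\|_{n*}$-type norms of $f_n$.

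The hard part is closing this last estimate with $o(m_n^2)$ decay: a naive Cauchy--Schwarz loses exactly the factor of $m_n$ one needs, so the argument must exploit summation by parts (and the compact support of $\phi$) repeatedly to absorb the discrete second differences into derivatives of $\phi$, leaving only bounded multiplicative factors coming from the $\bar w_{n,j}^i$. Once the product term is shown to vanish, combining it with the linear-part limit gives $\<P_n\phi,H_n f_n\>_{L^2}\to(\phi,cH_{\beta_0}f)$, which is the statement of the lemma.
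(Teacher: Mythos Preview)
Your treatment of the linear combination $a_n\bar H_n^p+b_n\bar H_n^q$ matches the paper's, as does the opening move on the product term: the paper also uses self-adjointness to write $\<P_n\phi,\bar H_n^p\bar H_n^q f_n\>=\<\bar H_n^p P_n\phi,\bar H_n^q f_n\>$ and the scaling $\rho_n=O(m_n^{-2})$. The gap is in how you close the product estimate. Invoking ``the coercivity bound from the analog of Lemma 5.6 to control the resulting $\|\cdot\|_{n*}$-type norms of $f_n$'' does not work here: the hypotheses of the lemma give only weak $L^2$-convergence of $f_n$ and $\Delta_n f_n$, hence $L^2$-boundedness of these, but no information on $\|(\tilde\eta_n)^{1/2}_\times f_n\|$, so $\|f_n\|_{n*}$ need not be finite. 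Coercivity runs the other direction (it bounds $\|v\|_{n*}$ via $\<H_n v,v\>$, which you do not control). Moreover, the noise in $\bar H_n^p P_n\phi$ is genuinely of size $m_n$ on the support of $\phi$ (from $|\Delta_n\bar w^p_{n,j}|\le m_n\sqrt{\epsilon\tilde\eta+c_\epsilon}$), and a further summation by parts picks up another factor of $m_n$; your own diagnosis that ``a naive Cauchy--Schwarz loses exactly the factor of $m_n$ one needs'' remains unaddressed by the sketch.

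The paper sidesteps norm estimates entirely and instead \emph{recycles} Lemma 5.7. It splits $\bar H_n^q f_n=-\Delta_n\Delta_n^* f_n+(\bar H_n^q f_n+\Delta_n\Delta_n^* f_n)$. For the Laplacian piece the factor $m_n^{-2}$ is absorbed into $-\Delta_n\Delta_n^*$ to give the bounded shift operator $(T_n^*-I)(I-T_n)$; one checks that $(T_n^*-I)(I-T_n)f_n\to 0$ and $\Delta_n(T_n^*-I)(I-T_n)f_n\to 0$ weakly, and then Lemma 5.7 applied to $\bar H_n^p$ with this weakly-null sequence yields $0$. For the remaining potential piece the factor $m_n^{-2}$ is absorbed into the test side, producing $g_n=m_n^{-2}\bar H_n^p P_n\phi$, which is uniformly bounded, compactly supported, with $g_n\to 0$ and $\Delta_n g_n\to 0$ in $L^2$; the proof of Lemma 5.7 with $g_n$ playing the role of $P_n\phi$ then gives $0$ again. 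This two-way splitting and the double reuse of Lemma 5.7 are the mechanism your proposal is missing.
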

\begin{proof} The bounds (2.5)--(2.6) can be extended additively to $a_n\bar H_n^p+b_n\bar H_n^q$ and the proof of Lemma 5.7 in \cite{MR2813333} goes through without change to show that under the hypotheses above \begin{equation}\<\phi,(a_n\bar H_n^p+b_n\bar H_n^q)f_n\>_{L^2}\to(\phi,c H_{\beta_0}f).\end{equation} Next,\[\frac{m_{n,p}^2m_{n,q}^2}{m_n^4\sigma_{n,p}\sigma_{n,q}}=O(m_n^{-2}),\] so the proof of Lemma 2.1 reduces to showing \begin{equation}\notag m_n^{-2}\<\phi,\bar H_n^p\bar H_n^qf_n\>_{L^2}=m_n^{-2}\<\bar H_n^pP_n\phi,-\Delta_n\Delta_n^{*}f_n\>_{L^2}+m_n^{-2}\<\bar H_n^pP_n\phi,\bar H_n^qf_n+\Delta_n\Delta_n^{*}f_n\>_{L^2}\to0.\end{equation}

First note that for $g\in L^2$, $T_ng\to g$ in $L^2$ and likewise for $T_n^*$.  Then \[\<g,T_nf_n\>_{L^2}=\<T_n^*g,f_n\>_{L^2}\to\<g,f\>_{L^2}\] so $T_nf_n\to f$ weakly and likewise for $T_n^*f_n$.  Similarly $T_nT_n^*f_n\to f$ weakly.  Thus \[(T_n^*-I)(I-T_n)f_n\to0\] weakly.  Next observe that\[\<g,\Delta_n(T_n^*-I)(I-T_n)f_n\>=\<g,(I-T_n^*)(T_n-I)\Delta_n^*f_n\>=\<(T_n^*-I)(I-T_n)g,\Delta_n^*f_n\>\] and $(T_n^*-I)(I-T_n)g\to0$ in $L^2$.  We also have $\Delta_n^*f_n\to-f^\prime$ weakly.  Thus \[\Delta_n(T_n^*-I)(I-T_n)f_n\to0\] weakly as well and Lemma 5.7 now implies \[m_n^{-2}\<\bar H_n^pP_n\phi,-\Delta_n\Delta_n^*f_n\>_{L^2}=\<\phi,\bar H_n^p(T_n^*-I)(I-T_n)f_n\>_{L^2}\to0.\]

For the terms \[\<m_n^{-2}\bar H_n^pP_n\phi,\bar H_n^qf_n+\Delta_n\Delta_n^*f_n\>_{L^2}\] we note that from the proof of Lemma 5.7 in \cite{MR2813333} we have the following:  If $g_n\in L_n^*$ is such that $g_n$ is bounded both uniformly independent of $n$,  $g_n$ and $\Delta_ng_n$ both have supports that are contained in a finite interval $I$ for all $n$, and both are convergent in $L^2$ with \[g_n\stackrel{L^2}\to g\quad\mbox{and}\quad  \Delta_ng_n\stackrel{L^2}\to g^\prime,\] then \[\<g_n,\bar H_n^qf_n+\Delta_n\Delta_n^*f_n\>_{L^2}\to( g,\bar H^qf+f^{''})\] for all $f_n$ as above.  Thus if we show that $g_n=m_n^{-2}\bar H_n^pP_n\phi$ satisfies the above hypothesis and $g_n\to0$ the proof will be complete.

The existence of $I$ comes from $\phi\in C_c^\infty$ and uniform boundedness follows easily from (\ref{increment bound}) and (\ref{eta bounds}) together with the compact support and uniform boundedness of $P_n\phi$.

To control \[\Delta_nm_n^{-2}\bar H_n^pP_n\phi\] we first consider $\Delta_n(-m_n^{-2}\Delta_n\Delta_n^{*}P_n\phi)=(I-T_n^*)(T_n^*-I)\Delta_n^{*}P_n\phi.$  By the arguments above this converges to $0$ in $L^2$.  For the potential term \begin{align}&\Delta_nm_n^{-2}\left(\left(\Delta_n\bar y_{n,1}^p\right)_\times P_n\phi+\frac12\left(\Delta_n\bar y_{n,2}^p\right)_\times T_nP_n\phi+\frac12T^*_n\left(\Delta_n\bar y_{n,2}^p\right)_\times P_n\phi\right)\\\notag&\quad=(I-T_n^*)\left(\left((I-T_n^*)\bar y_{n,1}^p\right)_\times P_n\phi+\frac12\left((I-T_n^*)\bar y_{n,2}^p\right)_\times T_nP_n\phi\right.\\\notag&\left.\qquad+\frac12T^*_n\left((I-T_n^*)\bar y_{n,2}^p\right)_\times P_n\phi\right),\end{align} we note that $\bar y^p_{n,j}(x)$ are locally bounded and convergent a.e.  This combined with the compact support of $P_n\phi$ implies the $\bar y^p_{n,j}(x)$ converge locally in $L^2$, and by the arguments above regarding $T_n$ we find that the above converges to $0$ in $L^2$.  That $m_n^{-2}\bar H_n^pP_n\phi\stackrel{L^2}\to0$ follows similarly.

\end{proof}

\begin{lem}Define the following norm on $\R^n$: \[\|v\|_{*n}^2=m_n^{-1}(\|\Delta_nv\|_{\R^n}^2+\|(\tilde\eta_{n})_\times^\frac12v\|_{\R^n}^2+\|v\|_{\R^n}^2).\]  Then we have constants $C_k>0$ and $N>0$ such that for all $n>N$ \begin{equation} C_1\|v\|_{n*}^2-C_2m_n^{-\frac12}\|v\|_{\R^n}\sqrt{\|v\|_{n*}^2}-C_3m_n^{-1}\|v\|_{\R^n}^2\leq \< H_nv,v\>_{L^2}.\end{equation}

\end{lem}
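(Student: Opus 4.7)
My plan is to use the decomposition
\[
\langle H_n v, v\rangle_{\R^n} = a_n\langle \bar H_n^p v, v\rangle_{\R^n} + b_n\langle \bar H_n^q v, v\rangle_{\R^n} - \delta_n \langle \bar H_n^p\bar H_n^q v, v\rangle_{\R^n},
\]
with $\delta_n := m_{n,p}^2m_{n,q}^2/(m_n^4\sigma_{n,p}\sigma_{n,q})=O(m_n^{-2})$, bounding the additive piece from below by the $\bar H$-analog of Lemma~5.6 in \cite{MR2813333} and the product piece from above by Cauchy--Schwarz followed by a direct quadratic estimate of $\|\bar H_n^i v\|_{\R^n}$. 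The target inequality will appear after dividing by $m_n$ to convert the $\R^n$ pairing to the $L^2$ pairing.

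For the additive part, as noted in the paragraph containing (\ref{increment bound})--(\ref{eta bounds}), the proof of Lemma~5.6 in \cite{MR2813333} goes through verbatim for $\bar H_n^i$, yielding a lower bound in which the role of $\|\cdot\|_{n,i*}$ is now played by the norm $m_n^{-1}(\|\Delta_n v\|_{\R^n}^2 + \|(\bar\eta_n^i)_\times^{1/2}v\|_{\R^n}^2 + \|v\|_{\R^n}^2)$. By (\ref{eta bounds}) this quantity is equivalent to $\|v\|_{*n}^2$ uniformly in $n$, and under $p=O(n)=q$ the coefficients $a_n, b_n$ are bounded above and below by positive constants, so summing gives
\[
a_n\langle \bar H_n^p v, v\rangle_{\R^n} + b_n\langle \bar H_n^q v, v\rangle_{\R^n} \geq K_1 m_n\|v\|_{*n}^2 - K_2\|v\|_{\R^n}^2.
\]

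For the product piece, the symmetry of each $\bar H_n^i$ together with Cauchy--Schwarz give $|\langle \bar H_n^p\bar H_n^q v, v\rangle_{\R^n}| \leq \|\bar H_n^p v\|_{\R^n}\|\bar H_n^q v\|_{\R^n}$, and I would estimate each factor by splitting $\bar H_n^i v = -\Delta_n\Delta_n^* v + V_n^i v$ into its second-difference and potential parts. The second-difference piece obeys $\|\Delta_n\Delta_n^* v\|_{\R^n}^2 \leq 4m_n^2\|\Delta_n^* v\|_{\R^n}^2 \leq Cm_n^3\|v\|_{*n}^2$. For the potential piece, I would combine (\ref{increment bound}) with the individual pointwise bound $|\bar\eta_{n,j,k}^i|\leq C\tilde\eta_{n,k}$, which is immediate for $j=1$ from the explicit formula for $\bar\eta_{n,1}^i$ and which follows for $j=2$ by writing $\bar\eta_{n,2}^i = (\bar\eta_{n,1}^i+\bar\eta_{n,2}^i) - \bar\eta_{n,1}^i$ and applying (\ref{eta bounds}). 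Summing against $v_k^2$ and using $\tilde\eta_{n,k} = O(m_n^2)$ together with $\|(\tilde\eta_n)_\times^{1/2}v\|_{\R^n}^2 \leq m_n\|v\|_{*n}^2$ yields $\|V_n^i v\|_{\R^n}^2 \leq C(\epsilon) m_n^3\|v\|_{*n}^2 + C(\epsilon) m_n^2\|v\|_{\R^n}^2$, whence $\|\bar H_n^i v\|_{\R^n} \leq C(\epsilon)(m_n^{3/2}\|v\|_{*n} + m_n\|v\|_{\R^n})$. Multiplying the two factors and the prefactor $\delta_n$,
\[
\delta_n|\langle \bar H_n^p\bar H_n^q v, v\rangle_{\R^n}| \leq C(\epsilon)\bigl(m_n\|v\|_{*n}^2 + m_n^{1/2}\|v\|_{*n}\|v\|_{\R^n} + \|v\|_{\R^n}^2\bigr).
\]

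Subtracting this from the additive lower bound and dividing by $m_n$ produces an inequality of the form $(K_1 - C(\epsilon))\|v\|_{*n}^2 - C(\epsilon)m_n^{-1/2}\|v\|_{*n}\|v\|_{\R^n} - C_3 m_n^{-1}\|v\|_{\R^n}^2 \leq \langle H_n v, v\rangle_{L^2}$, which is precisely the shape claimed. The main obstacle is the final balancing: the product perturbation contributes an $O(m_n)\|v\|_{*n}^2$ correction of the same order as the coercive main term, so the argument only closes by choosing $\epsilon$ in (\ref{increment bound}) small enough (and $n$ large enough) to keep $C_1 := K_1 - C(\epsilon)$ strictly positive. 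This forces the $\epsilon$-dependence to be tracked explicitly through the estimate on $\|V_n^i v\|_{\R^n}$, rather than invoking Lemma~5.6 as a black box.
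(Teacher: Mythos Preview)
There is a genuine gap in the balancing step. Your Cauchy--Schwarz bound on the product term produces a correction $C\,m_n\|v\|_{*n}^2$ whose constant $C$ is \emph{not} $\epsilon$-small. Indeed, the second-difference piece satisfies $\|\Delta_n\Delta_n^*v\|_{\R^n}^2\le 4m_n^2\|\Delta_n^*v\|_{\R^n}^2\le C_0 m_n^3\|v\|_{*n}^2$ with $C_0$ fixed, and the drift part of the potential obeys $\|(\bar\eta_{n,j}^i)_\times v\|_{\R^n}^2\le C_0'\max_k\tilde\eta_{n,k}\cdot\|(\tilde\eta_n)^{1/2}_\times v\|_{\R^n}^2\le C_0' m_n^3\|v\|_{*n}^2$ with $C_0'$ also fixed; only the noise increments in (\ref{increment bound}) carry an $\epsilon$. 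Hence $\delta_n\|\bar H_n^pv\|\|\bar H_n^qv\|$ contains a term of order $(\text{const})\cdot m_n\|v\|_{*n}^2$ that does not shrink with $\epsilon$, and there is no reason this constant is smaller than the coercive constant $K_1$ coming from Lemma~5.6. Your final sentence identifies the right obstacle but misattributes its source: the competing constant is deterministic, so no choice of $\epsilon$ helps.

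The paper avoids this precisely by \emph{not} applying Cauchy--Schwarz to the full operators. It splits $\bar H_n^i=A^i+B^i+C^i$ (second difference, drift, noise) and handles the deterministic product $\langle(A^p+B^p)v,(A^q+B^q)v\rangle$ structurally: one checks $m_n^{-2}\langle(A^i+B^i)v,v\rangle\le 4\|v\|^2$ and $d_n/a_n\le \tfrac14 m_n^{-2}$, so $I-d_na_n^{-1}(A^q+B^q)$ has spectrum in $[0,1]$; since $A^p+B^p\ge0$, a numerical-range argument gives $\langle(A^p+B^p)(I-d_na_n^{-1}(A^q+B^q))v,v\rangle\ge -\|v\|_{\R^n}^2$, which is harmless after dividing by $m_n$. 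All remaining cross terms involve at least one factor of $C^i$, and \emph{those} are genuinely $\epsilon$-small via (\ref{increment bound}). That separation---structural control of $(A+B)\times(A+B)$, $\epsilon$-control of anything touching $C$---is the missing idea in your sketch.
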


\begin{proof} We have by definition
\begin{align}\bar H_n^iv=-\Delta_n\Delta_n^{*}\notag v&+\left(\left(\bar\eta_{n,1}^i\right)_\times v+\frac12\left(\bar\eta_{n,2}^i\right)_\times T_nv+\frac12T_n^*\left(\bar\eta_{n,2}^i\right)_\times v\right)\\&\notag+\left(\left(\Delta_n\bar w_{n,1}^i\right)_\times v+\frac12\left(\Delta_n\bar w^i_{n,2}\right)_\times T_nv+\frac12T_n^*\left(\Delta_n\bar w^i_{n,2}\right)_\times v\right)\\&=A^iv+B^iv+C^iv\notag.\end{align}

So letting \[d_n=\frac{m_{n,p}^2m_{n,q}^2}{m_n^4\sigma_{n,p}\sigma_{n,q}},\] we have \begin{align}\notag& a_n\<\bar H_n^pv,v\>+b_n\<\bar H_n^qv,v\>-\frac{m_{n,p}^2m_{n,q}^2}{m_n^4\sigma_{n,p}\sigma_{n,q}}\<\bar H_n^pv,\bar H_n^qv\>\\&=a_n(\<(A^p+B^p)(I-d_na_n^{-1}(A^q+B^q))v,v\>)+b_n\<(A^q+B^q)v,v\>\\&\quad+d_n\left(\<C^qv,(A^p+B^p)v\>+\<C^pv,(A^q+B^q)v\>+\<C^qv,C^pv\>\right)\\\notag&\qquad+a_n\<C^pv,v\>+b_n\<C^qv,v\>.
\end{align}
We first bound (2.13) and then (2.12).
We have from (\ref{increment bound}) \begin{align}m_n^{-1}\|\Delta_n^i\bar w_{n,j,k}v_k\|\leq\|\sqrt{\epsilon\tilde\eta_{n,k}+c_\epsilon}v_k\|.\end{align}

Then for $m_n^{-2}\<C^qv,C^pv\>$ we have \begin{align}\notag m_n^{-1}\|C^iv\|&\leq \|\sqrt{\epsilon\tilde\eta_{n,k}+c_\epsilon}v_k\|+\frac12\|\Delta_n^i\bar w_{n,2,k}^iT_nv_k\|+\frac12\|T_n^*\Delta_n^i\bar w_{n,2,k}^iv_k\|\\\notag&= \|\sqrt{\epsilon\tilde\eta_{n,k}+c_\epsilon}v_k\|+ \frac12\|\Delta_n^i\bar w_{n,2,k}^iv_{k+1}\|+\frac12\|T_n^*\Delta_n^i\bar w_{n,2,k}^iv_k\|\\\notag&\leq\|\sqrt{\epsilon\tilde\eta_{n,k}+c_\epsilon}v_k\|+ \frac12\|\sqrt{\epsilon\tilde\eta_{n,k}+c_\epsilon}v_{k+1}\|+\frac12\|T_n^*\Delta_n^i\bar w_{n,2,k}^iv_k\|\\\notag&\leq\|\sqrt{\epsilon\tilde\eta_{n,k}+c_\epsilon}v_k\|+ \frac12\|\sqrt{\epsilon\tilde\eta_{n,k+1}+c_\epsilon}v_{k+1}\|+\frac12\|\sqrt{\epsilon\tilde\eta_{n,k-1}+c_\epsilon}v_{k-1}\|\\\notag&\leq2\|\sqrt{\epsilon\tilde\eta_{n,k}+c_\epsilon}v_k\|
\end{align}
and so \begin{align}\notag|m_n^{-2}\<C^qv,C^pv\>|\leq4\|\sqrt{\epsilon\tilde\eta_{n,k}+c_\epsilon}v_k\|^2&=4\epsilon\|\sqrt{\tilde\eta_{n,k}}v_k\|^2+c_\epsilon\|v\|^2\\&\leq4\epsilon m_n\|v\|_{*n}^2+c_\epsilon\|v\|_{\R^n}^2.\end{align}

For the $\<A,C\>$ terms, \[m_n^{-1}\|A^iv\|=c^i\|(I-T_n^*)\Delta_n^{*}v\|\leq2c^i\|\Delta_nv\|\] for constants $c^i>0$, so we have \[m_n^{-1}\|A^iv\|\leq c_A\|\Delta_nv\|\]  for some $c_A>0$.  Thus \begin{align}\notag m_n^{-2}|\<C^qv,A^pv\>|&\leq2\|\sqrt{\epsilon\tilde\eta_{n,k}+c_\epsilon}v_k\|c_A\|\Delta_nv\|\\\notag&\leq 2c_A(\sqrt{\epsilon m_n\|v\|_{n*}^2}+\sqrt{c_\epsilon}\|v\|)\sqrt{m_n\|v\|_{n*}^2}\\&=2c_A\left(\sqrt{\epsilon}m_n\|v\|_{n*}^2+\sqrt{c_\epsilon}\|v\|_{\R^n}\sqrt{m_n\|v\|_{n*}^2}\right), \end{align}
and similarly for $m_n^{-2}\<A^qv,C^pv\>$.

For the $\<B,C\>$ terms note that \[m_n^{-2}|(\Delta_nw^i_{n,j})_k|\leq m_n^{-1}\sqrt{\epsilon\tilde\eta+c_\epsilon}\leq\sqrt\epsilon\sqrt{m_n^{-2}\tilde\eta}+m_{m,q}^{-1}\sqrt{c_\epsilon}\leq c_1\sqrt\epsilon+m_{m,q}^{-1}\sqrt{c_\epsilon}.\]  By Cauchy-Schwarz and (\ref{eta bounds}) we have \begin{align}\notag|m_n^{-2}\<C^qv,B^pv\>|&\leq c_2(c_1\sqrt\epsilon+m_n^{-1}\sqrt{c_\epsilon})\sum (\tilde\eta_{n})_kv_k^2\\&\leq c_3(c_1\sqrt\epsilon+m_n^{-1}\sqrt{c_\epsilon})m_n\|v\|_{n*}^2\end{align} and likewise for $m_n^{-2}\<C^pv,B^qv\>$.

For the remaining noise terms, we have from the proof of Lemma 5.6 in \cite{MR2813333} that \[\<C^iv,v\>\geq -c_4\sqrt{\epsilon} m_n\|v\|_{n*}^2-c_5(\epsilon)\|v\|_{\R^n}^2.\]

For (2.11), first we note that arguing as in \cite{MR2813333} using (\ref{eta bounds}) we have \[\<(A^p+B^p)v,v\>\geq0.\]   After some algebra we find\[\frac{d_n}{a_n}=\frac{\sqrt{\frac qn}}{(1+\sqrt{\frac qn})^2}m_{n}^{-2}\leq\frac14m_n^{-2}.\]

By definition, \begin{align}\notag m_n^{-2}\<(A^p+B^p)v,v\>&=\sum(m_n^{-2}(\bar\eta^p_{n,1})_k-2)v_k^2+m_n^{-2}(\bar\eta^p_{n,2})_k+2)v_{k}v_{k+1}\\\notag&\leq\sum (m_n^{-2}(\bar\eta^p_{n,2})_k+2)v_{k}v_{k+1}\\\notag&\leq 4\|v\|^2\end{align} using (\ref{eta bounds}) and Cauchy-Schwarz.  Thus \[d_na_n^{-1}\<(A^p+B^p)v,v\>\leq\|v\|^2\] and so \[I-d_na_n^{-1}(A^p+B^p)\] is Hermitian with spectrum contained in $[0,1]$.  Thus \[T\equiv(A^p+B^p)(I-d_na_n^{-1}(A^p+B^p)),\] being the product of two Hermitian, nonnegative matrices has only real, nonnegative eigenvalues (though it need not be normal).  Then using standard results (see e.g$.$ \cite{ MR1091716}, chapter 1 and \cite{MR1616464}) on the numerical range of $T$, \[\{\<Tv,v\>:\|v\|=1\},\] we see that $\<Tv,v\>\geq-\|v\|^2$.  Thus \begin{equation} \<(A^p+B^p)(I-d_na_n^{-1}(A^p+B^p))v,v\>\geq-\|v\|_{\R^n}^2.\end{equation}

Lastly, from \cite{MR2813333}, Lemma 5.6, we know \[\<(A^q+B^q)v,v\>\geq c_6m_n\|v\|_{n*}^2-c_7\|v\|^2.\]

Noting that $a_n$, $b_n$, and $d_n$  are convergent, we now have constants $c_8,c_9,c_{10}(\epsilon),c_{11}(\epsilon),c_{12}(\epsilon)>0$ such that \begin{align}&a_n\<\bar H_n^pv,v\>+b_n\<\bar H_n^qv,v\>-d_n\<\bar H_n^pv,\bar H_n^qv\>\\\notag&\qquad\geq (c_8-c_9O(\epsilon)-c_{10}(\epsilon)m_n^{-1})m_n\|v\|_{n*}^2-c_{11}(\epsilon)\|v\|\sqrt{m_n\|v\|_{n*}^2}-c_{12}(\epsilon)\|v\|^2\end{align} where $O(\epsilon)\to0$ as $\epsilon\to0$.  Taking $\epsilon$ small and then $n$ large establishes the Lemma.
\end{proof}
\begin{lem} Suppose $f_n\in L^*_n$ with $\|f_n\|^2_{*n}\leq c<\infty$ and $\|f_n\|_{L^2}=1$.  Then there exists $f\in L^*$ and a subsequence $f_{n_k}$ such that $f_{n_k}\stackrel{L^2}{\to}f$ and for all $\phi\in C_c^\infty$ we have \[\<\phi, H_{n_k}f_{n_k}\>_{L^2}\to(\phi,c H_{\beta_0}f).\]
\end{lem}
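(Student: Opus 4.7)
The plan is to mirror the proof of Lemma 5.8 in \cite{MR2813333} for the compactness step and then invoke Lemma 2.1 for the operator-convergence step. The key observation is that the norm $\|\cdot\|_{*n}$ is the direct discrete analogue of the $\|\cdot\|_*$-norm defining $L^*$, with scale $m_n$ in place of $m_{n,i}$, so the hypothesis $\|f_n\|^2_{*n} \leq c$ supplies exactly the Sobolev-type control needed for compactness.

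First, using the isometry of $L^*_n$ with $\R^n$ under the weighted inner product, I reinterpret the bound $\|f_n\|^2_{*n} \leq c$ as a uniform bound in $L^2[0,\infty)$ on the three step functions $f_n$, $\Delta_n f_n$, and $\sqrt{x}\, f_n$ (identifying $\tilde\eta_{n,k} = k/m_n$ with the variable $x$ on the $k$-th step). This allows extraction of a subsequence, not relabeled, along which $f_n$ and $\Delta_n f_n$ converge weakly in $L^2$ to limits $f$ and $g$.

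Second, I upgrade the weak convergence of $f_n$ to strong $L^2$ convergence. Tightness at infinity is immediate since $\int_R^\infty f_n^2\,dx \leq R^{-1}\int_0^\infty x f_n^2\,dx$ is uniformly $O(R^{-1})$. On each compact interval $[0,R]$, the uniform $L^2$ control on $\Delta_n f_n$ gives a discrete equicontinuity for $f_n$, since consecutive step values differ by $m_n^{-1}(\Delta_n f_n)_k$, and a discrete Rellich--Kondrachov argument extracts a further subsequence converging in $L^2[0,R]$. A diagonal procedure combined with the tightness bound then yields $f_{n_k} \to f$ strongly in $L^2[0,\infty)$. Identifying $g = f'$ distributionally by integrating against $\phi \in C_c^\infty$ and using the discrete integration-by-parts for $\Delta_n$, together with lower semicontinuity of $\|\cdot\|_*$ under weak $L^2$ convergence of $f_n$ and $\Delta_n f_n$, shows $f \in L^*$.

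Finally, with $f_{n_k} \to f$ strongly in $L^2$ (hence weakly) and $\Delta_n f_{n_k} \to f'$ weakly in $L^2$, the hypotheses of Lemma 2.1 are satisfied and that lemma directly yields $\langle \phi, H_{n_k} f_{n_k}\rangle_{L^2} \to (\phi, c H_{\beta_0} f)$ for every $\phi \in C_c^\infty$. The main obstacle is the discrete compactness in the second step, but this is exactly the content of Lemma 5.8 in \cite{MR2813333}, whose argument depends only on the $L^*$-norm structure and carries over verbatim once $m_{n,i}$ is replaced by $m_n$; with compactness in place, no genuinely new argument about $H_n$ itself is required at this stage, since Lemma 2.1 already absorbs the complications from the product term $\bar H_n^p \bar H_n^q$.
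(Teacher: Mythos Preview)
Your proposal is correct and follows exactly the approach the paper itself indicates: the paper's proof of this lemma consists of the single line ``The proof is the same as that of Lemma 5.8 in \cite{MR2813333} and we omit it,'' and your sketch simply unpacks that reference---compactness via the discrete Rellich--Kondrachov argument with $m_n$ replacing $m_{n,i}$, followed by an appeal to Lemma~2.1 in place of Lemma~5.7.
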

\begin{proof}
The proof is that same as that of Lemma 5.8 in \cite{MR2813333} and we omit it.
\end{proof}

Let $\bar\lambda_{n,0}$ and $v_{n,0}$ be the smallest eigenvalue and corresponding eigenvector of $ H_n$ such that $\|v_{n,0}\|^2_{L^2}=m_n^{-1}\|v_{n,0}\|^2_{\R^n}=1$, and let $\Lambda_0$ and $f_0$ be the same for $ H_{\beta_0}$.  To show that $\bar\lambda_{n,0}\to c\Lambda_0$ we can proceed exactly as in \cite{MR2813333}, repeating the arguments for completeness.

Suppose $\liminf \bar\lambda_{n,0}<\infty$.  Lemma 2.2 shows that $\bar\lambda_{n,0}$ is uniformly bounded below so there exists a subsequence such that $\bar\lambda_{n_k,0}\to\liminf\bar\lambda_{n,0}$.  Lemma 2.2 now implies that $\|v_{n_k,0}\|_{n*}^2$ are uniformly bounded, Lemma 2.3 then implies that a further subsequence converges to some $f\in L^*$ as in Lemma 2.1, and so Lemma 2.1 implies that for this further subsequence \[\<P_n\phi, H_{n_k}v_{n_k,0}\>_{L^2}\to (\phi,c H_{\beta_0}f).\] Then it follows that \[\frac{(\phi, cH_{\beta_0}f)}{\<f,f\>_{L^2}}=\liminf\bar\lambda_{n,0}\frac{\<\phi,f\>_{L^2}}{\<f,f\>_{L^2}}\] for all $\phi\in C_c^\infty$.  Thus \[\liminf\bar\lambda_{n,0}\geq c\Lambda_0.\]

To see $\limsup\bar\lambda_{n,0}\leq c\Lambda_0$, let $f^\epsilon\in C_c^\infty$ be such that $\|f^\epsilon-f_0\|^2_{*}<\epsilon$.  Then by the minmax principle and Lemma 2.1, \begin{align}\limsup\bar\lambda_{n,0}&\leq\limsup_{n\to\infty}\frac{\<P_nf^\epsilon, H_nP_nf^\epsilon\>_{L^2}}{\<P_nf^\epsilon,P_nf^\epsilon\>_{L^2}}\\\notag&=\frac{(f^\epsilon, cH_{\beta_0}f^\epsilon)}{\<f^\epsilon,f^\epsilon\>_{L^2}}.\end{align} Letting $\epsilon\to0$ we have \[\limsup\bar\lambda_{n,0}\leq\frac{(f_0,c H_{\beta_0}f_0)}{\<f_0,f_0\>_{L^2}}=c\Lambda_0.\]

Noting that by definition \[-\bar\lambda_{n,0}=c_n\frac{\lambda_{n,0}-\mu_n}{\sigma_n},\] what we have then is that for every subsequence of $\{\lambda_{n,0}\}$ there exists a probability space and a further subsequence along which \[\frac{\lambda_{n,0}-\mu_n}{\sigma_n}\to -\Lambda_0\]  almost surely.  Recalling that $-\Lambda_0\sim TW_{\beta_0}$, Theorem 1.1 obtains.

\section{Some remarks} The reader may note that contrary to the approach in the classical case, the framework in terms of a limiting operator allows us to avoid determining the eigenvalue densities for finite $n$, which, depending on one's point of view can be either an advantage or disadvantage to the approach.  

Although Theorem 1.1 does not tell us about the largest eigenvalue of the product of two independent Wishart matrices, it does suggest some interesting questions regarding the classical ensembles.  For example, in \cite{MR2293813} the authors determine the limiting empirical spectral distribution for a product of independent Wisharts, the limit depending on the ratio of the two parameters in the product.  The authors there conjecture that the limiting distribution of the largest eigenvalue of such a product is a Tracy-Widom law.  One can then ask the following:  If the limit does indeed follow a Tracy-Widom law $TW_\beta$, what is $\beta$, and does it depend on the parameters in a way similar to that in Theorem 1.1?  Much is still unknown about the full family of $TW_\beta$ distributions and it would be of interest to see them arise for $\beta\neq 1,2,4$ in the context of the classical ensembles.

\section{Acknowledgements} The author thanks his advisor, Harold Parks, for his time and encouragement, along with Yevgeniy Kovchegov and Mathew Titus for helpful discussions.
\bibliographystyle{plain}
\bibliography{products}
\end{document}